\theoremstyle{plain}\newtheorem{theo}{Theorem}
\theoremstyle{plain}
\theoremstyle{definition}
\theoremstyle{plain}\newtheorem{defi}{Definition}[section]
\theoremstyle{plain}
\theoremstyle{plain}\newtheorem{prop}[defi]{Proposition}
\theoremstyle{definition}
\newcommand{\N}{{\mathds N}}
\newcommand{\Z}{{\mathds Z}}
\DeclareMathOperator{\var}{Var}
\begin{document}

\title[$U$-statistic Processes Indexed by Random Walk]{Stable Limit Theorem for $U$-Statistic Processes Indexed by a Random Walk}


\author[B. Franke]{Brice Franke}
\address{Universit\'e de Brest, UMR CNRS 6205, 29238 Brest cedex, France}
\email{brice.franke@univ-brest.fr}
\author[F. P\`{e}ne]{Fran\c{c}oise P\`ene}
\address{Universit\'e de Brest, UMR CNRS 6205, 29238 Brest cedex, France}
\email{francoise.pene@univ-brest.fr}
\author[M. Wendler]{Martin Wendler}
\address{Ruhr-Universit\"at Bochum, 44780 Bochum, Germany}
\email{martin.wendler@rub.de}

\keywords{random walk; random scenery; $U$-statistics; stable limits; law of the iterated logarithm}

\begin{abstract} Let $(S_n)_{n\in\N}$ be a $ \Z $-valued random walk with increments from the domain of attraction of some $\alpha$-stable law and let $(\xi(i))_{i\in\Z}$ be a sequence of iid random variables. 
We want to investigate $U$-statistics indexed by the random walk $S_n$, that is $U_n:=\sum_{1\leq i<j\leq n}h(\xi(S_i),\xi(S_j))$ for some symmetric bivariate function $h$. We will prove the weak convergence without assumption of finite variance. Additionally, under the assumption of finite moments of order greater than two, we will establish a law of the iterated logarithm for the $U$-statistic $U_n$.
\end{abstract}

\maketitle

\section{Introduction}
Random walks in random scenery were introduced by Kesten and Spitzer \cite{kest}. 
They  studied the partial sum process $\sum_{k=1}^n\xi(S_k)$, where $S_k:=\sum_{m=1}^kX_m$ is a random walk 
based on some sequence of $ \Z $-valued i.i.d. random variables  $(X_m)_{m\in\N}$ and  $(\xi(i))_{i\in\Z}$ is a sequence of 
real valued i.i.d. random variables which are supposed to be independent of the random walk $(S_k)_{k\in\N}$. 
The law of the random variable $ X_1 $ is supposed to belong to the normal domain of attraction of an $\alpha$-stable law
$F_\alpha$ with $0<\alpha\leq 2$, i.e.: one has
\begin{equation*}
P\left(n^{-\frac{1}{\alpha}}S_n\leq x\right)\rightarrow F_\alpha(x).
\end{equation*}
It is then well known that the sequence of stochastic processes
\begin{equation*}
S_t^{(n)}:=n^{-\frac{1}{\alpha}}S_{[nt]};\ \ t\geq0,n\in\N
\end{equation*}
converges in distribution towards an $\alpha$-stable L\'evy process $S_t^\star$ (see Skorokhod \cite{skor}, Theorem 2.7). 
It is further assumed that the random walk $(S_n)_{n\in\N}$ is irreducible and strongly aperiodic. For the case $ \alpha>1 $,
Kesten and Spitzer \cite{kest} showed that if the scenery variable $ \xi(1) $ is in the normal domain of attraction of some $ \beta $-stable law
$ F_\beta $ with $ 1<\beta\leq 2 $, then the partial sum process converges after some suitable renormalization toward
some specific continuous self-similar process $ (\Delta_t)_{t\geq0} $ with stationary increments. 
Since the random walk $ (S_k)_{k\in\N} $ can visit the same location several times, the sequence of random variables 
$ (\xi(S_k))_{k\in\N} $ shows some long range dependence. 
For the limit process $ (\Delta_t)_{t\geq0} $ this imposes some non-classical scaling index and also some non-trivial dependence of the stationnary increments. 
The construction of the process $ (\Delta_t)_{t\geq0} $ is given after Theorem \ref{theo1}.
The case $ \alpha\le 1 $ was studied extensively in \cite{Bolthausen,DU,cast}.  

A natural and widely applicable generalization of partial sums are $ U $-statistics. In this paper we want to investigate the
asymptotic behavior of $U$-statistics indexed by some random walk $ (S_k)_{k\in\N} $, which are defined as follows:
\begin{equation*}
U_n:=\sum_{1\leq i<j\leq n}h(\xi(S_i),\xi(S_j)).
\end{equation*}
In what follows we will assume that $h$ is a bivariate, measurable and symmetric function such that
\begin{equation} \label{Bedingungen}
E[|h(\xi(1),\xi(2))|]<\infty\ \ \text{and} \ \ E[|h(\xi(1),\xi(1))|]<\infty.
\end{equation}
We further assume that $Eh(\xi(1),\xi(2))=0$. The random walk $ (S_k)_{k\in\N} $ is supposed to fulfill the same assumptions as 
in the paper of Kesten and Spitzer \cite{kest} described above.
 
A classic approach for $U$-statistics in the case of finite second moments is the \hyphenation{Hoeff-ding} Hoeffding decomposition \cite{hoef}. 
We can write
\begin{equation*}
U_n=(n-1)\sum_{i=1}^nh_1(\xi(S_i))+\sum_{1\leq i<j\leq n}h_2(\xi(S_i),\xi(S_j))
\end{equation*}
with
\begin{align*}
h_1(x)&:=E(h(x,\xi(1)))\\
h_2(x,y)&:=h(x,y)-h_1(x)-h_1(y).
\end{align*}
We call $L_n:=\sum_{i=1}^nh_1(\xi(S_i))$ the linear part of the $U$-statistic $U_n$ and $R_n=\sum_{1\leq i<j\leq n}h_2(\xi(S_i),\xi(S_j))$ the remainder term. 
The idea of using the Hoeffding decomposition in the present context is based on the following crucial argument.
Under second moment assumptions, the family of random variables $(h_2(\xi_x,\xi_y))_{x\ne y}$ is pairwise uncorrelated. 
This leads to the following expression of the conditional variance of the remainder term, when the random walk is given
 \begin{equation*}
\var\left[R_n\big|(S_k)_{k\in\N}\right]=\sum_{x,y\in\Z}\var[h_2(\xi_x,\xi_y)]\left(\#\right\{(i,j)\big| i,j\leq n,\ i\neq j,\ S_i=x,\ S_j=y\left\}\right)^2.
\end{equation*}
By using moment bounds for the occupation times of the random walk $(S_n)_n$, that is the number of indices $i\le n$ such that $S_i=x$ (for $x\in \mathbb Z$), the 
asymptotic domination of the linear part $(n-1)L_n$ will follow.
In the particular case when $S_k=k$, our process $U_n$ corresponds to the 
classical $U$-statistic with independent entries $ \xi_i $. 
There under second moment assumptions on $\xi $ one has
$\var[R_n]=O(n^2)$ while $\var[(n-1)L_n]$ is of order $n^3$. It turns out that for the random walk in random scenery, considered here, the Hoeffding decomposition is helpful in spite of the dependence.

The $U$-statistic indexed by a random walk was examined by Cabus and Guillotin-Plantard \cite{cab} and Guillotin-Plantard and Ladret \cite{guil}, but only in the case of finite fourth moments.
 We will extend their results to the case when $h_1(\xi_1)$ is in the normal domain of attraction of a stable distribution. 
However, we then have to overcome another obstacle which results from the lack of second moments, which was crucial in the 
above argument.  
To deal with this problem, we will adapt a truncation method Heinrich and Wolf \cite{hein} introduced to study $U$-statistics with stable limits under independent random variables. 

Note that for other models of long range dependence (e.g. Gaussian sequences with slowly decaying covariances), both the linear part and the remainder term might contribute to the limit distribution, see Beutner and Z\"ahle \cite{beut}. Because of this, other methods, like representing the $U$-statistics as a functional of the empirical distribution function, are appropriate, see Dehling and Taqqu \cite{dehl}.

Another approach to study $ U $-statistics with heavy tails was presented by Da\-browski, Dehling, Mikosch, Sharipov \cite{dabr}.
They use a method based on functional convergence of suitably defined point processes to handle the existence of second moments.
This alternative approach was used in Franke, P\`ene and Wendler \cite{fpw2} to prove distributional convergence of 
$ U $-statistics indexed by a random walk. There it is proved that under some different assumptions on the scenery variables and
also under some different scaling the $ U $-statistics process converges toward some stochastic integral with respect to some 
L\'evy sheet.

We complete our study with a law of the iterated logarithm for the $U$-statistic process indexed by $S_n$, extending results from Lewis \cite{lewi} and Zhang \cite{zhan} for the partial sum indexed by a random walk.

\section{Main Results}

Our first theorem will establish the weak convergence of the $U$-statistic process without assuming that the summands of the linear part have second (or even higher) moments. More precisely, we will assume that
the law $\mathcal{L}(h_1(\xi(1)))$ is in the normal domain of attraction of a $\beta$-stable law $F_\beta$ with $1<\beta\leq 2$

For $1<\alpha\leq 2$ or $\beta=2$, the random walk in random scenery converges to a continuous limit (see \cite{kest,Bolthausen,DU}), even if the scenery contains jumps, so we define the continuous version of the $U$-statistics process
\begin{equation*}
U_{n}(t)=U_{k}\ \ \ \text{if} \ \ nt=k\in\N
\end{equation*}
and linear interpolated in between. We will prove weak convergence in the space of continuous functions $C[0,1]$ equipped with the supremum norm. For $\alpha\le 1$ and $\beta<2$, the limit process has a discontinuous limit (see Castell, Guillotin-Plantard, P\`ene \cite{cast}), so we will consider the space of c\`adl\`ag-functions $D[0,1]$ endowed with the Skorohod $M_1$-topology (see Skorokhod \cite{sko2}).

\begin{theo}\label{theo1}
Let $(S_n)_{n\in\mathbb Z}$ be a two-sided
random walk such that $S_0=0$ and with identically distributed increments $X_n=S_n-S_{n-1}$.
Assume that the law $\mathcal{L}(h_1(\xi(1)))$ is in the normal domain of attraction of a $\beta$-stable law $F_\beta$ with $1<\beta\leq 2$. 
Assume furthermore
that there exists  $\beta'>\beta$ such that $E|h(\xi(1),\xi(2))|^\eta<\infty$ with $\eta=\frac{2\beta'}{1+\beta'}$. 
Assume that the law $\mathcal{L}(X_1)$ is in the normal domain of attraction of an $\alpha$-stable law $F_\alpha$.
\begin{itemize}
\item If $1<\alpha\leq 2$, then we have the weak convergence in $C[0,1]$
\begin{equation*}
\left(n^{-2+\frac{1}{\alpha}-\frac{1}{\alpha\beta}}U_n(t)\right)_{t\in[0,1]}\Rightarrow(\Delta_t)_{t\in[0,1]},
\end{equation*}
with $\Delta_t$ as defined in Kesten and Spitzer \cite{kest}.
\item  If $\alpha=1$ 
and if $\frac{1}{n}\sum_{i=1}^n X_i$ converges in distribution to $aZ$ for a Cauchy-distributed random variable $Z$, then we have the following weak convergence in the sense of the finite
dimensional distributions\begin{equation*}
\left(n^{-1-\frac{1}{\beta}}(\log n)^{\frac{1-\beta}{\beta}}U_{\lfloor nt\rfloor}\right)_{t\in[0,1]}\Rightarrow\left(\frac{(\Gamma(\beta+1))^{\frac 1\beta}}{(a\pi)^{\frac{\beta-1}\beta}}Y_t\right)_{t\in[0,1]},
\end{equation*}
where $Y$ is the $\beta$-stable L\'evy-Process which is the limit of $n^{-1/\beta}\sum_{i=1}^{\lfloor nt\rfloor}\xi_i$. 
This weak convergence also holds in  $(D[0,1],M_1)$ if $1<\beta<2$
and in $(D[0,1],J_1)$ if $\beta=2$.
\item If $0<\alpha<1$, then we have the weak convergence in the sense of the finite dimensional distributions
\begin{equation*}
\left(n^{-1-\frac{1}{\beta}}U_{\lfloor nt\rfloor}\right)_{t\in[0,1]}\Rightarrow\left(bY_t\right)_{t\in[0,1]},
\end{equation*}
where $Y$ is the $\beta$-stable L\'evy-Process which is the limit of $n^{-1/\beta}\sum_{i=1}^{\lfloor nt\rfloor}\xi_i$ and
with $b=\left(E\left|\sum_{i\in\mathbb Z} \mathds{1}_{\{S_i=0\}}\right|
^{\beta-1}\right)^{1/\beta}$.
This weak convergence also holds in  $(D[0,1],M_1)$ if $1<\beta<2$
and in $(D[0,1],J_1)$ if $\beta=2$.
\end{itemize}
\end{theo}
Observe that we can choose $\beta'$ such that $\eta<\beta$, that means that the summands $h$ of the $U$-statistic might have less moments than $h_1$, so without loss of generality, we can assume that $E|h_1(\xi(1))|^\eta<\infty$.

To give the definition of the process $(\Delta_t)_{t\in[0,1]}$, we have to introduce some notation. Let $(T_t(x))_{t\geq0}$ be the local time of the limit process $(S_t^\star)_{t\geq0}$ of the rescaled partial sum $(n^{-\frac{1}{\alpha}}\sum_{i=1}^{[nt]}X_i)_{t\geq0}$, that means
\begin{equation*}
\int_0^t\mathds{1}_{[a,b)}(S_s^\star)ds=\int_a^bT_t(x)dx
\end{equation*}
almost surely. Let $(Z_{+}(t))_{t\geq0}$ and $(Z_{-}(t))_{t\geq0}$ be two independent copies of the limit process of the rescaled partial sum process $\left(n^{-\frac{1}{\beta}}\sum_{i=1}^{[nt]}h_1(\xi(i))\right)_{t\geq0}$. Then the limit process of the random walk in random scenery is defined as
\begin{equation*}
\Delta_t=\int_0^\infty T_t(x)dZ_+(x)+\int_0^\infty T_t(-x)dZ_-(x).
\end{equation*}

For random walks in random scenery, Lewis \cite{lewi} and Khoshnevisan and Lewis \cite{khos} proved the law of the iterated logarithm. This was improved by Cs\'aki,  K\"onig, Shi \cite{csak} and Zhang \cite{zhan} using strong approximation methods. In our second theorem, we will extend these results to $U$-statistics:

\begin{theo}\label{theo2}
Let the assumption of Theorem 1 hold with $\alpha=\beta=2$ and additional $E|h_1(\xi(i))|^p<\infty$ and $E|X_i|^p<\infty$ for some $p>2$. Then
\begin{align*}
\limsup_{n\rightarrow\infty}\frac{U_n}{n^{\frac{7}{4}}(\log\log n)^{\frac{3}{4}}}&=\frac{2^{\frac{1}{4}}\var(h_1(\xi(1)))^{\frac{1}{2}}}{3\var(X)^{\frac{1}{4}}}\\
\liminf_{n\rightarrow\infty}\frac{U_n}{n^{\frac{7}{4}}(\log\log n)^{\frac{3}{4}}}&=-\frac{2^{\frac{1}{4}}\var(h_1(\xi(1)))^{\frac{1}{2}}}{3\var(X)^{\frac{1}{4}}}\\
\end{align*}
almost surely.
\end{theo}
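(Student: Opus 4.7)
The plan is to use the Hoeffding decomposition
\begin{equation*}
U_n=(n-1)L_n+R_n,\quad L_n=\sum_{i=1}^n h_1(\xi(S_i)),\quad R_n=\sum_{1\leq i<j\leq n}h_2(\xi(S_i),\xi(S_j)),
\end{equation*}
to prove the law of the iterated logarithm for the linear term $(n-1)L_n$ and to show that $R_n$ is almost surely negligible at the rate $n^{7/4}(\log\log n)^{3/4}$.

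For the linear part, observe that $L_n$ is itself the partial sum process of a one-dimensional random walk in random scenery, the scenery being $h_1(\xi(x))$, which is centered and has finite $p$-th moment for some $p>2$ by hypothesis. I invoke the LIL for one-dimensional random walks in random scenery in the Gaussian regime $\alpha=\beta=2$, due to Cs\'aki, K\"onig, Shi \cite{csak} (and refined by Zhang \cite{zhan}), which yields
\begin{equation*}
\limsup_{n\to\infty}\frac{L_n}{n^{3/4}(\log\log n)^{3/4}}=\frac{2^{1/4}\var(\xi)^{1/2}}{3\var(X)^{1/4}}\ \ \text{a.s.},
\end{equation*}
together with the analogous $\liminf$ identity. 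Since $(n-1)L_n/(n^{7/4}(\log\log n)^{3/4})=(1-n^{-1})\cdot L_n/(n^{3/4}(\log\log n)^{3/4})$ and $1-n^{-1}\to 1$, the LIL transfers with the same constant to $(n-1)L_n$.

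The technical heart is the remainder. Introducing the local time $N_n(x)=\#\{1\leq i\leq n:S_i=x\}$ and the self-intersection local time $V_n=\sum_x N_n(x)^2$, I rewrite
\begin{equation*}
R_n=\sum_{x<y}N_n(x)N_n(y)h_2(\xi(x),\xi(y))+\sum_x\binom{N_n(x)}{2}h_2(\xi(x),\xi(x)).
\end{equation*}
The complete degeneracy $E[h_2(x,\xi(1))]=0$ for each fixed $x$ reduces the conditional variance of the off-diagonal piece, given $\mathcal{F}_S=\sigma(S_i,i\geq 1)$, to a single diagonal sum, yielding $E[(R_n^{\mathrm{off}})^2\mid\mathcal{F}_S]\leq CV_n^2$. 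The diagonal piece is controlled by combining the strong law for the i.i.d. sequence $h_2(\xi(x),\xi(x))$ with $\sum_x\binom{N_n(x)}{2}=O(V_n)$. Since $V_n=O(n^{3/2})$ almost surely for a random walk in the normal domain of attraction of Brownian motion, this already gives $R_n=O_P(n^{3/2})$, a factor $n^{1/4}$ below the LIL scale. To upgrade this to almost sure negligibility at the precise rate $o(n^{7/4}(\log\log n)^{3/4})$, I propagate the second-moment estimate to a higher-order analogue through a Rosenthal-type inequality for completely degenerate $U$-statistics with random weights, apply Chebyshev and Borel--Cantelli along the geometric subsequence $n_k=2^k$, and bridge to general $n$ by a maximal inequality on the $U$-statistic increments.

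The main obstacle is executing this last step under the limited moment assumption available on $h$ itself: Theorem \ref{theo1} only provides $E|h(\xi(1),\xi(2))|^\eta<\infty$ for some $\eta<2$, not a full second moment on $h$. I expect to handle this by truncating $h$ at a slowly growing threshold, applying the Rosenthal-type bound to the bounded part, and absorbing the tail contribution using the $\eta$-moment of $h$ together with the asymptotic behaviour of $V_n$. Once the almost sure negligibility of $R_n$ is in hand, combining with the LIL for $L_n$ produces both the $\limsup$ and $\liminf$ identities of Theorem \ref{theo2}.
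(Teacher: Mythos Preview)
Your proposal is correct and follows essentially the same route as the paper: Hoeffding decomposition, Zhang's LIL for the linear part $L_n$, and almost-sure negligibility of $R_n$ obtained by truncating $h$, bounding conditional second moments via $V_n$, and running Chebyshev plus Borel--Cantelli along the geometric subsequence $2^l$. The paper packages the remainder control as a separate proposition (using M\'oricz's maximal inequality rather than a Rosenthal-type bound, and moment estimates $E[V_n^2]=O(n^{3}\log^2 n)$ rather than an almost-sure bound on $V_n$), but the strategy and ingredients are the same.
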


\section{Auxiliary Results}

We define the occupation times $N_n(x):=\sum_{i=1}^n\mathds{1}_{\{S_i=x\}}$. Let us write $V_n:=\sum_{x\in\Z}N_n^2(x)$. Observe that
\begin{equation*}
V_n=\sum_{x\in\Z}(\sum_{k=1}^n \mathds{1}_{\{S_k=x\}})^2=\sum_{x\in\Z}\sum_{k,l=1}^n \mathds{1}{\{S_k=S_l=x\}}=\sum_{k,l=1}^n\mathds{1}_{\{S_k=S_l\}}
\end{equation*}
equals $n$ plus the number of self intersections of the random walk.

\begin{prop}\label{lem1}
If $0<\alpha\leq 2$, then
\begin{align*}
E\left[V_n\right]&=O(n^{2-\frac{1}{\alpha'}}\log n),\\
E\left[V_n^2\right]&=O(n^{4-\frac{2}{\alpha'}}\log^2n),
\end{align*}
with $\alpha':=\max(1,\alpha)$.
\end{prop}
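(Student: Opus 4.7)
The plan rests on a uniform local-limit bound
\begin{equation*}
P(S_m = x) \le C\, m^{-1/\alpha'}, \qquad m \ge 1,\ x \in \Z,
\end{equation*}
which I would derive by Fourier inversion: the stable domain-of-attraction hypothesis yields $|\phi_{X_1}(t)| \le \exp(-c|t|^\alpha)$ in a neighborhood of $0$, while strong aperiodicity gives some $r < 1$ with $|\phi_{X_1}(t)| \le r$ on $[-\pi,\pi]$ away from $0$. Integrating $|\phi_{X_1}(t)|^m$ therefore yields $O(m^{-1/\alpha})$, which can be absorbed into $O(m^{-1/\alpha'})$ since $m^{-1/\alpha} \le m^{-1}$ whenever $\alpha \le 1$.

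For the first claim, stationarity of the increments gives
\begin{equation*}
E[V_n] = \sum_{k,l=1}^n P(S_k = S_l) = n + 2\sum_{m=1}^{n-1}(n-m)P(S_m = 0) \le n + 2n\sum_{m=1}^n P(S_m=0).
\end{equation*}
Inserting the local-limit bound and using the uniform inequality $\sum_{m=1}^n m^{-1/\alpha'} \le C n^{1-1/\alpha'} \log n$ (sharp only at $\alpha = 1$) yields $E[V_n] = O(n^{2-1/\alpha'} \log n)$.

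For the second claim, I would expand
\begin{equation*}
E[V_n^2] = \sum_{k_1, l_1, k_2, l_2 = 1}^n P(S_{k_1}=S_{l_1},\, S_{k_2}=S_{l_2})
\end{equation*}
and classify each quadruple by the sorted times $t_1 \le t_2 \le t_3 \le t_4$ of its distinct entries and by the pairing linking them: disjoint $(12)(34)$, nested $(14)(23)$, or crossed $(13)(24)$. Letting $A, B, C$ be the independent increments over the gaps $g_i = t_{i+1} - t_i$, the joint event reads respectively $\{A = 0,\, C = 0\}$, $\{B = 0,\, A + C = 0\}$, or $\{A + B = 0,\, B + C = 0\}$. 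In each case a brief expansion combined with the bound $\sup_y P(S_m = y) \le C m^{-1/\alpha'}$ gives
\begin{equation*}
P \le C^2\, g_i^{-1/\alpha'} g_j^{-1/\alpha'}
\end{equation*}
for some pair $\{i,j\} \subset \{1,2,3\}$, the remaining gap $g_k$ being unconstrained. Summing over $t_1$ and over $g_k$ (each contributing a factor $n$) and over $g_i, g_j$ via $\sum_{g=1}^n g^{-1/\alpha'} \le C n^{1-1/\alpha'}\log n$ delivers $n^{4-2/\alpha'}\log^2 n$ per configuration; degenerate quadruples with coinciding indices total only $O(n^{3-2/\alpha'}\log^2 n)$ and are absorbed.

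The main obstacle is the case analysis behind the $E[V_n^2]$ bound: in the nested and crossed pairings one must rewrite the joint event so as to isolate an independent increment whose probability at a single point is controlled by the local-limit bound, so that the remaining gap truly drops out and no configuration exceeds the target exponent $4 - 2/\alpha'$.
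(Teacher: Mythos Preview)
Your argument is correct and more explicit than what the paper does. The paper does not prove this proposition at all: it simply cites Lemma~2.1 of Guillotin-Plantard and Ladret for $1<\alpha\le 2$, Deligiannidis and Utev for $\alpha=1$, and the exponential moment estimate $\sup_n E[\exp(\gamma V_n/n)]<\infty$ from Castell, Guillotin-Plantard, P\`ene and Schapira for $0<\alpha<1$. Your route---a uniform local-limit bound $P(S_m=x)\le Cm^{-1/\alpha'}$ obtained by Fourier inversion (using the normal-domain-of-attraction expansion near $0$ and strong aperiodicity away from $0$), followed by a direct combinatorial expansion of $E[V_n]$ and $E[V_n^2]$---gives a unified, self-contained treatment of all three regimes at once, at the cost of writing out the pairing case analysis. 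The paper's approach buys brevity by delegating to the literature; yours buys transparency and avoids three separate references.

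One small correction that does not affect the conclusion: not every degenerate quadruple is $O(n^{3-2/\alpha'}\log^2 n)$. For instance the contribution from $k_1=l_1$ (trivial first constraint) is $n\cdot E[V_n]=O(n^{3-1/\alpha'}\log n)$. But since $\alpha'\ge 1$ implies $3-1/\alpha'\le 4-2/\alpha'$, this is still dominated by the main term, so your absorption claim stands. The nested and crossed computations are handled exactly as you describe: in $(14)(23)$ one bounds $P(A+C=0)\le\sup_y P(S_{g_3}=y)$ leaving $g_1$ free, and in $(13)(24)$ one sums $\sum_b P(B=b)P(A=-b)P(C=-b)\le\sup_a P(S_{g_1}=a)\sup_c P(S_{g_3}=c)$ leaving $g_2$ free.
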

For the case $1<\alpha\leq 2$, this follows from Lemma 2.1 of Guillotin-Plantard, Ladret \cite{guil}. For the case $\alpha=1$ and more precise results, we refer to Deligiannidis, Utev \cite{DU}. The bound for the case $0<\alpha<1$ comes for example from the proof of Lemma 19 in Castell, Guillotin-Plantard, P\`ene,  Schapira \cite{BFFN} in which it is proven that there exists $\gamma>0$ such that $\sup_n{\mathbb E}[\exp(\gamma V_n/n)]<\infty$.

\begin{prop}\label{lem2} Under the conditions of Theorem 1 for $0<\alpha\leq 2$, we have that
\begin{equation*}
\max_{k\leq n}R_k=o(n^{2-\frac{1}{\alpha'}+\frac{1}{\alpha'\beta}})
\end{equation*}
almost surely with $\alpha':=\max(1,\alpha)$.
\end{prop}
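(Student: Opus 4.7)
The plan is to work conditionally on the random walk $(S_i)$ and to exploit the first-order degeneracy of $h_2$, namely $E[h_2(x,\xi(1))]=0$. Expressing $R_n$ through the occupation times, one decomposes
\begin{equation*}
R_n = \tfrac12\sum_{x\neq y}N_n(x)N_n(y)\,h_2(\xi(x),\xi(y)) + \sum_x\binom{N_n(x)}{2}h_2(\xi(x),\xi(x)).
\end{equation*}
The diagonal sum is a weighted sum over visited sites of independent variables with finite first moment; its conditional expectation is bounded by $\tfrac12 V_n\,E|h_2(\xi(1),\xi(1))|$, and Proposition~\ref{lem1} places this at order $n^{2-1/\alpha'}\log n$, comfortably inside the target. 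The real issue is the off-diagonal piece $R_n^{\mathrm{off}}$.

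Were $h_2$ in $L^2$, the degeneracy would kill every cross-term in a conditional second-moment computation and yield $E[(R_n^{\mathrm{off}})^2\mid S]\leq \tfrac12 E[h_2(\xi,\xi')^2]V_n^2$; Proposition~\ref{lem1} would then give $R_n^{\mathrm{off}}=O_{L^2}(n^{2-1/\alpha'}\log n)$, strictly smaller than the goal $n^{2-1/\alpha'+1/(\alpha'\beta)}$. Under the weaker hypothesis $E|h|^\eta<\infty$, I would truncate $h_2$ at a level $M=M_n$ and re-degenerate the bounded part: set
\begin{equation*}
\bar h_2^{\leq M}(x,y):=h_2^{\leq M}(x,y)-\phi_M(x)-\phi_M(y)+c_M,\qquad \phi_M(x):=E[h_2^{\leq M}(x,\xi(1))],
\end{equation*}
and split $R_n^{\mathrm{off}}=A_n+B_n+C_n$, where $A_n$ is built from $\bar h_2^{\leq M}$, $B_n$ collects the linear correction involving $\phi_M(\xi(x))$, and $C_n$ uses the tail $h_2-h_2^{\leq M}$. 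Elementary estimates give $\|\bar h_2^{\leq M}\|_2^2\lesssim M^{2-\eta}E|h_2|^\eta$ and $E|h_2-h_2^{\leq M}|\lesssim M^{1-\eta}E|h_2|^\eta$; because degeneracy implies $\phi_M(x)=-E[h_2(x,\xi)\mathds{1}\{|h_2|>M\}]$, we also get $E|\phi_M(\xi(1))|\lesssim M^{1-\eta}$.

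Taking expectations and invoking Proposition~\ref{lem1} yields $E[A_n^2]=O(M^{2-\eta}n^{4-2/\alpha'}\log^2 n)$ and $E|B_n|+E|C_n|=O(n^2 M^{1-\eta})$. With $M=n^\theta$, both quantities become strictly smaller than $n^{2-1/\alpha'+1/(\alpha'\beta)}$ as soon as
\begin{equation*}
\frac{\beta-1}{(\eta-1)\alpha'\beta}<\theta<\frac{2}{(2-\eta)\alpha'\beta},
\end{equation*}
a non-empty interval precisely when $\eta>2\beta/(1+\beta)$, which is exactly what the assumption $\eta=2\beta'/(1+\beta')$ with $\beta'>\beta$ delivers. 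To promote these moment bounds to an almost sure $o$-estimate for $\max_{k\leq n}R_k$, I would run the argument along the dyadic subsequence $n_j=2^j$, use the common truncation level $M_{n_{j+1}}$ uniformly for every $k\in[n_j,n_{j+1}]$, and apply Borel-Cantelli at the geometric rate furnished by the strict inequalities above.

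The principal obstacle I foresee is the joint handling of truncation and of the supremum over $k$: a single $M$ must serve every $k$ inside a dyadic block, and the linear correction $B_n$ must be controlled by absorbing the bound $E|\phi_M(\xi)|\lesssim M^{1-\eta}$ (against both $n$ and $V_n$) so that it does not spoil the control of the main linear part. The strict hypothesis $\beta'>\beta$ is precisely what supplies the breathing room to close every inequality as a genuine $o$-bound rather than only an $O$-bound.
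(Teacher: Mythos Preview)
Your approach is essentially the paper's: truncate, re-degenerate via a Hoeffding decomposition of the truncated kernel, use the $L^2$ bound (orthogonality of the degenerate part together with Proposition~\ref{lem1} on $V_n^2$) for the bounded degenerate piece, $L^1$ bounds of size $n^2 M^{1-\eta}$ for the tail and linear-correction pieces, handle the diagonal via the self-intersection count, and conclude by Borel--Cantelli along the dyadic subsequence. The paper truncates $h$ rather than $h_2$, at the specific level $a_l=2^{l(1+\beta')/(\alpha'\beta')}$ (a particular point inside your admissible interval for $\theta$), and resolves the obstacle you flag---controlling $\max_{k\le 2^l}$ for the $L^2$ piece---by applying M\'oricz's maximal inequality to the conditional second moment of the increments, which is dominated by the superadditive function $\sum_{x,y} N_n(x)^2\bigl(N_{n_2}(y)-N_{n_1}(y)\bigr)^2$.
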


\begin{proof}
We define $a_l=2^{l\frac{1+\beta'}{\alpha'\beta'}}$  and the truncated kernel
\begin{equation*}
h_{0,l}(x,y):=\begin{cases}h(x,y) \ \ &\text{if} \ |h(x,y)|\leq a_l\\0&\text{else}\end{cases}.
\end{equation*}
We also need the Hoeffding decomposition for the truncated kernel:
\begin{align*}
h_{1,l}(x)&:=E(h_{0,l}(x,\xi(1)))\\
h_{2,l}(x,y)&:=h_{0,l}(x,y)-h_{1,l}(x)-h_{1,l}(y).
\end{align*}
We introduce the following notation:
\begin{align*}
\tilde{L}_{l,n}&:=\sum_{i=1}^nh_{1,l}(\xi(S_i))\\
\tilde{U}_{l,n}&:=\sum_{1\leq i<j\leq n}h_{0,l}(\xi(S_i),\xi(S_j))\\
\tilde{R}_{l,n}&:=\sum_{1\leq i<j\leq n}h_{2,l}(\xi(S_i),\xi(S_j)).
\end{align*}
Recall the Hoeffding decomposition
\begin{equation*}
U_n=(n-1)L_n+R_n.
\end{equation*}
Similar, we have that 
\begin{equation*}
\tilde{U}_{l,n}=(n-1)\tilde{L}_{l,n}+\tilde{R}_{l,n}.
\end{equation*}
We now obtain the following representation for the remainder term:
\begin{multline*}
R_n=U_n-(n-1)L_n=(U_n-\tilde{U}_{l,n})-(n-1)L_n+\tilde{U}_{l,n}\\
=(U_n-\tilde{U}_{l,n})-(n-1)L_n+(n-1)\tilde{L}_{l,n}+\tilde{R}_{l,n}=(U_n-\tilde{U}_{l,n})-(n-1)(L_n-\tilde{L}_{l,n})+\tilde{R}_{l,n}
\end{multline*}
We will treat the three summands separately. To do this we have to show
\begin{enumerate}
\item \begin{equation*}
\max_{n\leq 2^l}|U_n-\tilde{U}_{l,n}|=o(2^{l(2-\frac{1}{\alpha'}+\frac{1}{\alpha'\beta})})\quad \mbox{almost surely},
\end{equation*}
\item \begin{equation*}
\max_{n\leq 2^l}2^l|L_n-\tilde{L}_{l,n}|=o(2^{l(2-\frac{1}{\alpha'}+\frac{1}{\alpha'\beta})})\quad \mbox{almost surely},
\end{equation*}
\item \begin{equation*}
\max_{n\leq 2^l}|\tilde{R}_{l,n}|=o(2^{l(2-\frac{1}{\alpha'}+\frac{1}{\alpha'\beta})})\quad \mbox{almost surely}.
\end{equation*}
\end{enumerate}
In the proof of (1), we have to deal with the problem that we might have $S_i=S_j$ for $i\neq j$. For this reason we will treat separately the cases $ S_i=S_j $ and $ S_i\neq S_j $:
\begin{multline*}
|U_n-\tilde{U}_{l,n}|\leq \left|\sum_{\substack{1\leq i<j\leq n\\S_i\neq S_j}}(h(\xi(S_i),\xi(S_j))-h_{0,l}(\xi(S_i),\xi(S_j)))\right|\\
+\left|\sum_{\substack{1\leq i<j\leq n\\S_i=S_j}}(h(\xi(S_i),\xi(S_j))-h_{0,l}(\xi(S_i),\xi(S_j)))\right|=|A_{l,n}|+|B_{l,n}|.
\end{multline*}
In order to establish bounds for the maximum, we have to control the increments of $A_{l,n}$. Let $n_1,n_2\in\N$ with $n_1\leq n_2\leq 2^l$, then
\begin{equation*}
A_{l,n_2}-A_{l,n_1}=\sum_{\substack{1\leq i<j\leq n\\ n_1<j\leq n_2 \\S_i\neq S_j}}(h(\xi(S_i),\xi(S_j))-h_{0,l}(\xi(S_i),\xi(S_j))),
\end{equation*}
so we have at most $2^l(n_2-n_1)$ summands of $A_{l,n_2}-A_{l,n_1}$ and for every summand
\begin{eqnarray*}
E\left|h(\xi(S_i),\xi(S_j))-h_{0,l}(\xi(S_i),\xi(S_j))\right|&=&E|h(\xi(1),\xi(2))\mathds{1}_{\{|h(\xi(1),\xi(2))|>a_l\}}|\\
    &\leq& a_l^{1-\eta}E|h(\xi(1),\xi(2))\mathds{1}_{\{|h(\xi(1),\xi(2))|>a_l\}}|^{\eta}\\
     &\leq& a_l^{1-\eta}E|h(\xi(1),\xi(2))|^{\eta}.
\end{eqnarray*}
Consequently, we have by the triangular inequality that
\begin{eqnarray*}
E|A_{l,n_2}-A_{l,n_1}|&\leq& 2^l(n_2-n_1)E|h(\xi(1),\xi(2))\mathds{1}_{\{|h(\xi(1),\xi(2))|>a_l\}}|\\
                                          &\leq& 2^l(n_2-n_1)a_l^{1-\eta}E|h(\xi(1),\xi(2))|^{\eta}
\end{eqnarray*}
Fix some $\theta$ such that $1<\theta	<1+\frac 1{\alpha'\beta}-\frac 1{\alpha'\beta'}$
(such a $\theta$ exists since $\beta'>\beta$).
Observe that
$$
E|A_{l,n_2}-A_{l,n_1}|\leq C_0 2^l(n_2-n_1)^\theta a_l^{1-\eta}$$
with the constant $C_0=E|h(\xi(1),\xi(2))|^{\eta}$.
We can write $A_{l,n}=\sum_{i=1}^n(A_{l,i}-A_{l,i-1})$ (with $A_{l,0}:=0$) and in the same way $A_{l,n_2}-A_{l,n_1}=\sum_{i=n_1+1}^{n_2}(A_{l,i}-A_{l,i-1})$, so due to the maximal
 inequality given in Theorem 1 of M\'oricz \cite{mori} (applied with $\gamma=1$, $\theta$ instead of $\alpha$ and $(g(F_{b,n}))^\theta= C_0 2^ln^\theta a_l^{1-\eta}$), we obtain
\begin{equation*}
E\left|\max_{n\leq 2^l}A_{l,n}\right|\leq C_02^{l(1+\theta)}a_l^{1-\eta}.
\end{equation*}
Recall that $a_l=2^{l\frac{1+\beta'}{\alpha'\beta'}}$ and $\eta=\frac{2\beta'}{1+\beta'}$, so $1-\eta=\frac{1-\beta'}{1+\beta'}$. It follows from the Markov inequality that
\begin{multline*}
\sum_{l=1}^\infty P\left(\frac{1}{2^{l(2-\frac{1}{\alpha'}
    +\frac{1}{\alpha'\beta})}}\max_{n\leq 2^l}A_{l,n}\geq \epsilon\right)\leq \frac{C_0}{\epsilon}\sum_{l=1}^\infty\frac{2^{l(1+\theta)}a_l^{1-\eta}}{2^{l(2-\frac{1}{\alpha'}+\frac{1}{\alpha'\beta})}}\\
=\frac{C_0}{\epsilon}\sum_{l=1}^\infty\frac{2^{l\frac{1+\beta'}{\alpha'\beta'}\frac{1-\beta'}{1+\beta'}}}{2^{l(1-\theta-\frac{1}{\alpha'}
+\frac{1}{\alpha'\beta})}}=\frac{C_0}{\epsilon}\sum_{l=1}^\infty\frac{2^{l(\frac{1}{\alpha'\beta'}-\frac{1}{\alpha'})}}{2^{l(1-\theta-\frac{1}{\alpha'}+\frac{1}{\alpha'\beta})}}
=\frac{C_0}{\epsilon}\sum_{l=1}^\infty 2^{l(\frac{1}{\alpha'\beta'}-\frac{1}{\alpha'\beta}+\theta-1)}<\infty,
\end{multline*}
as $\frac{1}{\alpha'\beta'}-\frac{1}{\alpha'\beta}+\theta-1<0$. With the Borel-Cantelli lemma, we can now conclude that
\begin{equation*}
P\left(\frac{1}{2^{l(2-\frac{1}{\alpha'}+\frac{1}{\alpha'\beta})}}\max_{n\leq 2^l}A_{l,n}\geq \epsilon\text{ infinitely often }\right)=0
\end{equation*}
and thus $\max_{n\leq 2^l}A_{l,n}=o(2^{l(2-\frac{1}{\alpha'}+\frac{1}{\alpha'\beta})})$ almost surely. For $B_{l,n}$, we use the fact that the sequences $(S_n)_{n\in\N}$ and $(\xi(n))_{n\in\N}$ 
are independent and observe that
\begin{eqnarray*}
E|\max_{n\leq 2^l}B_{l,n}|&\leq& E\max_{n\leq 2^l}\sum_{\substack{1\leq i<j\leq n\\S_i=S_j}}\left|(h(\xi(S_i),\xi(S_j))-h_{0,l}(\xi(S_i),\xi(S_j)))\right|\\
&\leq& E\sum_{\substack{1\leq i<j\leq 2^l\\S_i=S_j}}\left|(h(\xi(S_i),\xi(S_j))-h_{0,l}(\xi(S_i),\xi(S_j)))\right|\\
&\leq& E\#\left\{(i,j)\ :\ 1\leq i<j\leq 2^l|S_i=S_j)\right\}E|h(\xi(1),\xi(1))\mathds{1}_{\{|h(\xi(1),\xi(1))|>a_l\}}|\\
&\leq& E\left|\sum_{x\in\Z}N_{2^l}^2(x)\right|E\left|h(\xi(1),\xi(1))|\leq C2^{l(2-\frac{1}{\alpha'})}\right|
\end{eqnarray*}
for some constant $C$, where we used Proposition \ref{lem1} for the occupation times $N_n(x):=\sum_{i=1}^n\mathds{1}_{\{S_i=x\}}$. Again using the Markov inequality, we arrive at
\begin{equation*}
\sum_{l=1}^\infty P\left(\frac{1}{2^{l(2-\frac{1}{\alpha'}+\frac{1}{\alpha'\beta})}}\max_{n\leq 2^l}B_{l,n}\geq \epsilon\right)\leq \frac{C}{\epsilon}\sum_{l=1}^\infty\frac{2^{l(2-\frac{1}{\alpha'})}l}{2^{l(2-\frac{1}{\alpha'}+\frac{1}{\alpha'\beta})}}\leq \frac{C}{\epsilon}\sum_{l=1}^\infty2^{-\frac{1}{\alpha'\beta}l}l<\infty
\end{equation*}
and,  as above, the Borel-Cantelli lemma leads to $\max_{n\leq 2^l}B_{l,n}=o(2^{l(2-\frac{1}{\alpha'}+\frac{1}{\alpha'\beta})})$ almost surely, which completes the proof of (1). To prove (2),  note that
\begin{eqnarray*}
E|h_1(\xi(1))-h_{1,l}(\xi(1))|&=&E|E[h(\xi(1),\xi(2))|\xi(1)]-E[h(\xi(1),\xi(2))\mathds{1}_{\{|h(\xi(1),\xi(2))|\leq a_l\}}|\xi(1)]|\\
  &=&E|E[h(\xi(1),\xi(2))\mathds{1}_{\{|h(\xi(1),\xi(2))|> a_l\}}|\xi(1)]|\\
   &\leq& E\left[E\left[|h(\xi(1),\xi(2))\mathds{1}_{\{|h(\xi(1),\xi(2))|> a_l\}}|\big|\xi(1)\right]\right]\\
    &=& E\left|h(\xi(1),\xi(2))\mathds{1}_{\{|h(\xi(1),\xi(2))|> a_l\}}\right|\leq \frac{1}{a_l^{\eta-1}}E|h(\xi(1),\xi(2))|^\eta.
\end{eqnarray*}
With the triangular inequality and the assumption that $E|h(\xi(1),\xi(2))|^\eta<\infty$, it follows that for some constant $C$ and any $n_1,n_2\in\N$ with $n_1\leq n_2$
\begin{equation*}
E|\sum_{i=n_1+1}^{n_2}(h_1(\xi(S_i))-h_{1,l}(\xi(S_i)))|\leq C(n_2-n_1)a_l^{1-\eta}.
\end{equation*}
Again, we apply the maximal inequality in Theorem 1 of M\'oricz \cite{mori} and obtain
\begin{equation*}
E\max_{n\leq 2^l}2^l|L_n-\tilde{L}_{l,n}|\leq C2^{l(1+\theta)}a_l^{1-\eta}
\end{equation*}
for every $\theta>1$ and we can proceed in the same way as we proved almost sure convergence for $A_{l,n}$. So it remains to show the last part (3). We will prove that
\begin{align*}
\max_{n\leq 2^l}|E\tilde{R}_{l,n}|&=o(2^{l(2-\frac{1}{\alpha'}+\frac{1}{\alpha'\beta})})\\
\max_{n\leq 2^l}|\tilde{R}_{l,n}-E\tilde{R}_{l,n}|&=o(2^{l(2-\frac{1}{\alpha'}+\frac{1}{\alpha'\beta})})\quad\mbox{almost surely}.
\end{align*}
We obtain with a short calculation that
\begin{equation*}
\tilde{R}_{l,n}=\tilde{U}_{l,n}-(n-1)\tilde{L}_{l,n}=(\tilde{U}_{l,n}-U_n)+(n-1)(L_n-\tilde{L}_{l,n})+R_n
\end{equation*}
and consequently
\begin{eqnarray*}
\max_{n\leq 2^l}|E\tilde{R}_{l,n}| &=& \max_{n\leq 2^l}\left|E(\tilde{U}_{l,n}-U_n)+(n-1)E(L_n-\tilde{L}_{l,n})+ER_n\right|\\
       &\leq& E\max_{n\leq 2^l}|U_n-\tilde{U}_{l,n}|+(n-1)E\max_{n\leq 2^l}|L_n-\tilde{L}_{l,n}|+\max_{n\leq 2^l}|ER_n|.
\end{eqnarray*}
We have already shown in (1) and (2) that the first two summands are of order $o(2^{l(2-\frac{1}{\alpha'}+\frac{1}{\alpha'\beta})})$. For the last summand, we use the fact that 
$$Eh_2(\xi(1),\xi(2))=Eh(\xi(1),\xi(2))-Eh_1(\xi(1))-Eh_1(\xi(2))=0$$ to see that
only the indices with $S_i=S_j$ contribute to the expectation 
\begin{equation*}
ER_n=E\left[\sum_{\substack{1\leq i<j\leq n\\ S_i=S_j}}h_2(\xi(S_i),\xi(S_j))\right]
\end{equation*}
and due to \ref{Bedingungen} and Proposition \ref{lem1} we have
\begin{eqnarray*}
\max_{n\leq 2^l}|ER_n|&\leq& \max_{n\leq 2^l}E\#\left\{(i,j)\ :\ 1\leq i<j\leq n|S_i=S_j\right\}E|h_2(\xi(1),\xi(1))|\\
   &\leq& E\#\left\{(i,j)\ :\ 1\leq i<j\leq 2^l|S_i=S_j\right\}E|h_2(\xi(1),\xi(1))|\\
  &\leq& C2^{l(2-\frac{1}{\alpha'})}l=o(2^{l(2-\frac{1}{\alpha'}+\frac{1}{\alpha'\beta})}).
\end{eqnarray*}
To show the convergence of the remaining part, we first decompose it as
\begin{eqnarray*}
\tilde{R}_{l,n}-E\tilde{R}_{l,n}&=&\sum_{\substack{1\leq i<j\leq n\\S_i\neq S_j}}(h_{2,l}(\xi(S_i),\xi(S_j))-Eh_{2,l}(\xi(1),\xi(2)))\\
      &&+\sum_{\substack{1\leq i<j\leq n\\S_i=S_j}}(h_{2,l}(\xi(S_i),\xi(S_j))-Eh_{2,l}(\xi(1),\xi(1)))=: C_{l,n}+D_{l,n}.
\end{eqnarray*}
For $D_{l,n}$, we have by the independence of $(S_n)_{n\in\N}$ and $(\xi(n))_{n\in\N}$ and the fact
\begin{eqnarray*}
E|h_{2,l}(\xi(1),\xi(1))|&\leq& E|h_{0,l}(\xi(1),\xi(1))|+2E|h_{1,l}(\xi(1))|\\
&\leq& E|h(\xi(1),\xi(1))|+2E|h(\xi(1),\xi(2))| \ < \ \infty,
\end{eqnarray*}
that
\begin{equation*}
E\max_{n\leq 2^l}|D_{l,n}|\leq E\#\left\{(i,j)\ :\ 1\leq i<j\leq 2^l|S_i=S_j\right\}2E|h_{2,l}(\xi(1),\xi(1))|\leq C2^{l(2-\frac{1}{\alpha'})}l.
\end{equation*}
In the same way as for $B_{l,n}$ we now can conclude that $\max_{n\leq 2^l}D_{l,n}=o(2^{l(2-\frac{1}{\alpha'}+\frac{1}{\alpha'\beta})})$ almost surely. Finally, we will deal with $C_{l,n}$. Recall that $h_{0,l}$ is bounded by $a_l$, so $h_{2,l}$ is bounded by $3a_l$. By the triangular inequality for the $L_\eta$-norm, we have that
\begin{eqnarray*}
E|h_{2,l}(\xi(1),\xi(2))|^\eta&\leq& \left( \left\|h_{0,l}(\xi(1),\xi(2))\right\|_\eta+2\left||h_{1,l}(\xi(1))\right\|_\eta\right)^\eta\\
&\leq& \left(3\left\|h(\xi(1),\xi(2))\right\|_\eta\right)^\eta,
\end{eqnarray*}
and as a consequence for some constant $C>0$
\begin{equation*}
E\left(h_{2,l}(\xi(1),\xi(2))\right)^2\leq (3a_l)^{2-\eta}E|h_{2,l}(\xi(1),\xi(2))|^\eta\leq C2^{l\frac{1+\beta'}{\alpha'\beta'}(2-\frac{2\beta'}{1+\beta'})}=C2^{l\frac{2}{\alpha'\beta'}}.
\end{equation*} 
Furthermore we have the property of the Hoeffding decomposition that the random variables $h_{2,l}(\xi(1),\xi(2))$ and $h_{2,l}(\xi(1),\xi(3))$ are uncorrelated, see Lee \cite{lee}, page 30. So we can find bounds for the conditional variance of the increments of $C_{l,n}$. To simplify the notation, we write
\begin{equation*}
Y(i,j):=h_{2,l}(\xi(i),\xi(j))\mathds{1}_{\{i\neq j\}}-\left(Eh_{2,l}(\xi(i),\xi(j))\right)\mathds{1}_{\{i\neq j\}}
\end{equation*}
and obtain for $n_1\leq n_2\leq 2^l$
\begin{eqnarray*}
&& E\left[\left(C_{l,n_2}-C_{l,n_1}\right)^2\big|(X_k)_{k\in\N}\right]=E\left[\left(\sum_{\substack{1\leq i<j\leq n_2\\n_1< j\leq n_2}}Y(S_i,S_j)\right)^2\big|(X_k)_{k\in\N}\right]\\
&=&\sum_{\substack{x,y\in\Z\\x<y}}\Big(\#\left\{(i,j)\ :\ 1\leq i< j\leq n_2|n_1<j\leq n_2,\ S_i=x,\ S_j=y\right\}\\
&&+\#\left\{(i,j)\ :\ 1\leq i< j\leq n_2|n_1<j\leq n_2,\ S_i=y,\ S_j=x\right\}\Big)^2E(Y(x,y))^2\\
&\leq& C2^{l\frac{2}{\alpha'\beta'}}2\sum_{x,y\in\Z}(N_{n_2}(x)(N_{n_2}(y)-N_{n_1}(y)))^2.
\end{eqnarray*}
Due to Theorem 3 of M\'oricz \cite{mori} apllied with $ \gamma=2 $ and the (random) superadditive function
\begin{equation*}
g(F_{b,n})=C2^{\ell\frac2{\alpha'\beta'}+1}\sum_{x,y\in\Z}N_{2^\ell}(x)^2(N_{b+n}(y)-N_{b}(y))^2.
\end{equation*}
It follows that
\begin{equation*}
E\left[\max_{n\leq 2^l}\left(\sum_{1\leq i<j\leq n}Y(S_i,S_j)\right)^2\big|(X_k)_{k\in\N}\right]\leq C2^{l\frac{2}{\alpha'\beta'}}\sum_{x\in\Z}N_{2^l}^2(x)\sum_{y\in\Z}N_{2^l}^2(y)l^2.
\end{equation*}
Taking the expectation with respect to $(X_k)_{k\in\N}$, we get the following bound using Proposition \ref{lem1} at
\begin{eqnarray*}
E\left(\max_{n\leq 2^l}C_{l,n}\right)^2&=&E\left[E\left[\max_{n\leq 2^l}\left(\sum_{1\leq i<j\leq n}Y(S_i,S_j)\right)^2\big|(X_k)_{k\in\N}\right]\right]\\
 &\leq& C2^{l\frac{2}{\alpha'\beta'}}l^2E\left(\sum_{x\in\Z}N_{2^l}^2(x)\right)^2\\
   &\leq& C2^{l\frac{2}{\alpha'\beta'}}l^42^{l(4-\frac{2}{\alpha'})}\\
  &=&C2^{l2(2-\frac{1}{\alpha'}+\frac{1}{\alpha'\beta'})}l^4.
\end{eqnarray*}
We can now use the Chebyshev inequality and arrive at
\begin{eqnarray*}
\sum_{l=1}^\infty P\left(\frac{1}{2^{l(2-\frac{1}{\alpha'}+\frac{1}{\alpha'\beta})}}\max_{n\leq 2^l}C_{l,n}\geq \epsilon\right)&\leq& \frac{C}{\epsilon^2}\sum_{l=1}^\infty\frac{2^{l2(2-\frac{1}{\alpha'}+\frac{1}{\alpha'\beta'})}l^4}{2^{l2(2-\frac{1}{\alpha'}+\frac{1}{\alpha'\beta})}}\\
&\leq&  \frac{C}{\epsilon^2}\sum_{l=1}^\infty 2^{l2(\frac{1}{\alpha'\beta'}-\frac{1}{\alpha'\beta})} l^4 \ < \ \infty
\end{eqnarray*}
and the Borel-Cantelli lemma completes the proof.
\end{proof}

\section{Proofs of Main Results}

\begin{proof}[Proof of Theorem \ref{theo1}] Recall the Hoeffding decomposition
\begin{equation*}
n^{-2+\frac{1}{\alpha'}-\frac{1}{\alpha'\beta}}U_{[nt]}=\frac{n-1}{n}n^{-1+\frac{1}{\alpha'}-\frac{1}{\alpha'\beta}}L_{[nt]}+n^{-2+\frac{1}{\alpha'}-\frac{1}{\alpha'\beta}}R_{[nt]}.
\end{equation*}
For the linear part in the case $1<\alpha\leq 2$, we apply Theorem 1.1 of Kesten and Spitzer \cite{kest} to the random variables $h_1(\xi(i))$ and conclude that $\left(n^{-1+\frac{1}{\alpha}-\frac{1}{\alpha\beta}}L_{[nt]}\right)_{t\in[0,1]}$ converges weakly to $\left(\Delta_t\right)_{t\in[0,1]}$. 

In the case
$\alpha=1$ and $\beta=2$, due to Theorem 2.3 in \cite{Bolthausen,DU},
$\left(L_{[nt]}/\sqrt{n\log n}\right)_{t\in[0,1]}$ converges in distribution to $\left(\frac{\sqrt{2}}{\sqrt{a\pi }}Y_t\right)_{t\in[0,1]}$ as $n$ goes to infinity.

In the case
$\alpha=1$ and $1<\beta< 2$, due to Theorem 1 of \cite{cast},  
$\left(L_{[nt]}/({n^{\frac 1\beta}(\log n)^{\frac{\beta-1}\beta}})\right)_{t\in[0,1]}$ converges in distribution (with respect to the $M_1$-metric) to $\left(\frac{(\Gamma(\beta+1))^{\frac 1 \beta}}{(a\pi )^{\frac{\beta-1}\beta}}Y_t\right)_{t\in[0,1]}$ as $n$ goes to infinity.

In the case $\alpha<1$ and $1<\beta\le 2$, the convergence of 
$\left(L_{[nt]}/n^{\frac 1\beta}\right)_{t\in[0,1]}$ is proved namely in Remark 2 of \cite{cast}.

For the tightness in $(D([0,1],J_1)$ when $\alpha<1$ and $\beta=2$,
we follow the proof of tightness Bolthausen in \cite{Bolthausen}. 
For completeness, we explain the adaptations to make.
Using the fact that $L_n(0)\equiv 0$ and that $(L_i)_{i\ge 0}$ is a sequence of partial sums of a stationary sequence, it is enough to prove that, for every $\varepsilon>0$,
there exists $\lambda>0$ such that
\begin{equation}\label{eq:tightness}
\exists n_0\ge 1,\quad \forall n\ge n_0,\quad \lambda^2\mathbb P\left(\max_{j\le n}|L_j|>\lambda\sqrt{ n}\right)\le \varepsilon.
\end{equation}
Recall that $(V_n/n)_n$ converges almost surely to $b^2$ (see for example \cite[p. 10]{kest}) and write
$$\mathbb P\left(\max_{j\le n}|L_j|>\lambda\sqrt{ n}\right)\le   
       \mathbb P\left(\max_{j\le n}|L_j|>\lambda\sqrt{ V_n}/(2b)\right)+\mathbb P\left(V_n>4b^2 n\right).$$
Let $\rho>\sqrt{2}$. 
Following \cite{Bolthausen}, we obtain that 
$$\mathbb P\left(\max_{j\le n}|L_j|\ge \sigma\rho \sqrt{V_n}\right) \le 
2\mathbb P\left(|L_n|>(\rho-\sqrt{2})\sigma\sqrt{V_n}\right),$$
with $\sigma:=\sqrt{\var(h_1(\xi_1))}$.
But we know that $(L_n/\sqrt{V_n})_n$ converges in distribution to $Y$, so 
\begin{eqnarray*}
\limsup_{n\rightarrow +\infty}\mathbb P\left(\max_{j\le n}|L_j|\ge \rho \sigma\sqrt{V_n}\right)
      \le  2\mathbb P\left(|Y|>(\rho-\sqrt{2})\sigma\right).
\end{eqnarray*}
Choose $\rho>\sqrt 2$ such that $2(2b\rho\sigma)^2 \mathbb P\left(|Y|>(\rho-\sqrt{2})\sigma\right)<
    \varepsilon$ (this is possible since $Y$ is gaussian). The tightness criteria
\eqref{eq:tightness} is satisfied with $\lambda=2b\rho\sigma$.

For the remainder term, we have proved in Proposition \ref{lem2} that
\begin{equation*}
\sup_{t\in[0,1]}\left|n^{-2+\frac{1}{\alpha'}-\frac{1}{\alpha'\beta}}R_{[nt]}\right|\longrightarrow 0
\end{equation*}
in probability. The statement of the theorem follows by Slutzky's theorem.

In the cases $\alpha\le 1$, note that the uniform convergence of the remainder $R_{[nt]}$ implies the convergence with respect to the $M_1$-topology.

\end{proof}

\begin{proof}[Proof of Theorem \ref{theo2}] We use again the Hoeffding decomposition
\begin{equation*}
\frac{U_n}{n^{\frac{7}{4}}(\log\log n)^{\frac{3}{4}}}=\frac{n-1}{n}\frac{L_n}{(n\log\log n)^{\frac{3}{4}}}+\frac{R_n}{n^{\frac{7}{4}}(\log\log n)^{\frac{3}{4}}}.
\end{equation*}
For the remainder term we use Proposition \ref{lem2} with $\alpha=\beta=2$:
\begin{equation*}
R_n=o(n^{2-\frac{1}{2}+\frac{1}{4}})=o(n^{\frac{7}{4}}(\log\log n)^{\frac{3}{4}})\quad\mbox{almost surely}.
\end{equation*}
As $L_n=\sum_{i=1}^nh_1(\xi(S_i))$, we can apply Corollary 1 of Zhang \cite{zhan} and we obtain
\begin{equation*}
\limsup_{n\rightarrow\infty}\pm\frac{L_n}{(n\log\log n)^{\frac{3}{4}}}=\frac{2^{\frac{1}{4}}\var(\xi)^{\frac{1}{2}}}{3\var(X)^{\frac{1}{4}}}
\end{equation*}
almost surely, which leads to the statement of the theorem.
\end{proof}

\section*{Acknowledgement}
The research was supported by the DFG Sonderforschungsbereich 823 (Collaborative Research Center) {\em Statistik nichtlinearer dynamischer Prozesse}.

\end{document}